\documentclass[11pt]{article}
\usepackage{amsmath, amssymb, amsthm}
\usepackage[margin=1in]{geometry}

\usepackage{parskip} 
\usepackage[colorlinks=true, linkcolor=magenta, citecolor=magenta, urlcolor=magenta]{hyperref}
\usepackage{graphicx}
\usepackage{tikz}

\newtheorem{theorem}{Theorem}[section]
\newtheorem{lemma}{Lemma}[section]

\theoremstyle{remark}

\newcommand{\authorinfo}{%
  \par\vspace{2em}
  \noindent
  Yulia Alexandr\\
  School of Mathematics and Statistics \\
  University of Melbourne\\
  \href{mailto:yulia.alexandr@unimelb.edu.au}
       {\texttt{yulia.alexandr@unimelb.edu.au}}
}

\title{Boundaries of logarithmic Voronoi cells can be transcendental}
\author{Yulia Alexandr}
\date{}

\begin{document}

\setlength{\abovedisplayskip}{7pt}
\setlength{\belowdisplayskip}{7pt}
\setlength{\abovedisplayshortskip}{5pt}
\setlength{\belowdisplayshortskip}{5pt}

\maketitle

\begin{abstract}
We show that logarithmic Voronoi boundaries for one-dimensional algebraic models in the probability simplex need not be algebraic. We give two explicit examples. For a union of two lines, the logarithmic Voronoi cell at a specific smooth rational point has a transcendental boundary point. For a cuspidal curve, the cell at the singular point contains a non-algebraic real-analytic boundary branch.
\end{abstract}

\section{Introduction}

Maximum likelihood estimation assigns to observed data the point or points on a statistical model that best explain it. Fixing a model point $p$, one can ask for all data vectors whose maximum likelihood estimate is $p$. This region in data space is the logarithmic Voronoi cell at $p$.

These cells describe what can be inferred about the data from a maximum likelihood estimate. If only the estimate $p$ is released, then every empirical distribution in $\operatorname{logVor}_{\mathcal M}(p)$ is compatible with that release. This connects logarithmic Voronoi cells to statistical disclosure limitation, where one seeks to understand what can be inferred about confidential data from released estimates \cite{slavkovic2023statistical}. It also motivates studying cells at singular and boundary points of the model, where the geometry of logarithmic Voronoi cells can change dramatically and the usual smooth critical-point methods no longer apply directly. The boundary of a cell captures a complementary feature of the estimation problem: it marks where the optimality of $p$ can be lost under perturbations of the data. In the examples studied here, such boundary points arise when $p$ ties with another point of the model.

Maximizing the likelihood is equivalent to minimizing the Kullback--Leibler divergence from the data to the model. Logarithmic Voronoi cells are therefore the Kullback--Leibler analogues of~Voronoi cells. Their study connects information geometry \cite{alexandr2025maximum, amari2000methods}, algebraic statistics \cite{drton2009lectures, sullivant2018algebraic}, and metric algebraic geometry \cite{breiding2024metric}. For Euclidean distance, Voronoi cells of real algebraic varieties are convex semialgebraic sets, and their algebraic boundaries reflect the geometry of the underlying variety \cite{cifuentes2022voronoi}. The likelihood setting differs in an essential way: the model is algebraic, but the objective function contains logarithms. As a result, these boundaries do not fall directly within the algebraic framework used for Voronoi cells of varieties. Related transcendence phenomena have been observed in maximum likelihood estimation for Gaussian mixture models \cite{amendola2016maximum}. In this paper, we show that even for one-dimensional algebraic models, logarithmic Voronoi boundaries can be~transcendental.

Logarithmic Voronoi cells were introduced in \cite{alexandr2021logarithmic}, where they were shown to be convex. For finite models, models of maximum likelihood degree one, linear models, and toric models, the cells are polytopes. Their combinatorial structure for discrete linear models was studied in \cite{alexandr2024logarithmic}; log-normal polytopes for squared linear models were studied in \cite{friedman2026squared}. The Gaussian analogue was developed in \cite{alexandr2024gaussian}. In that setting, the cells of the bivariate correlation model were shown to be semialgebraic, whereas those of unrestricted correlation models were conjectured to be transcendental.

\subsection*{Main results}

We show that logarithmic Voronoi boundaries need not be algebraic. In both examples, the model and the point whose cell we study are defined over $\mathbb Q$, so the transcendence appears in the data-space boundary rather than being inherited from the model point itself. For a union of two lines in~$\Delta_2$, we exhibit a smooth rational point $q$ such that $\operatorname{logVor}_{\mathcal M}(q)$ has a transcendental endpoint. The boundary condition reduces to a one-variable equation; a $3$-adic argument excludes rational solutions, while the Gelfond--Schneider theorem excludes algebraic irrational solutions.

For a cuspidal curve, we study the logarithmic Voronoi cell at its singular point $p_0$. We construct a real-analytic family of data for which $p_0$ ties with a smooth point of the curve. As the competing point approaches the boundary of the simplex, the corresponding branch acquires a logarithmic asymptotic term. The Newton--Puiseux theorem then shows that the branch is not algebraic, and a global maximization argument proves that it lies in $\partial\operatorname{logVor}_{\mathcal M}(p_0)$.

The geometry of the two examples is illustrated in Figure~\ref{fig:examples}.

\begin{figure}[ht]
    \centering
    \includegraphics[width=0.48\linewidth]{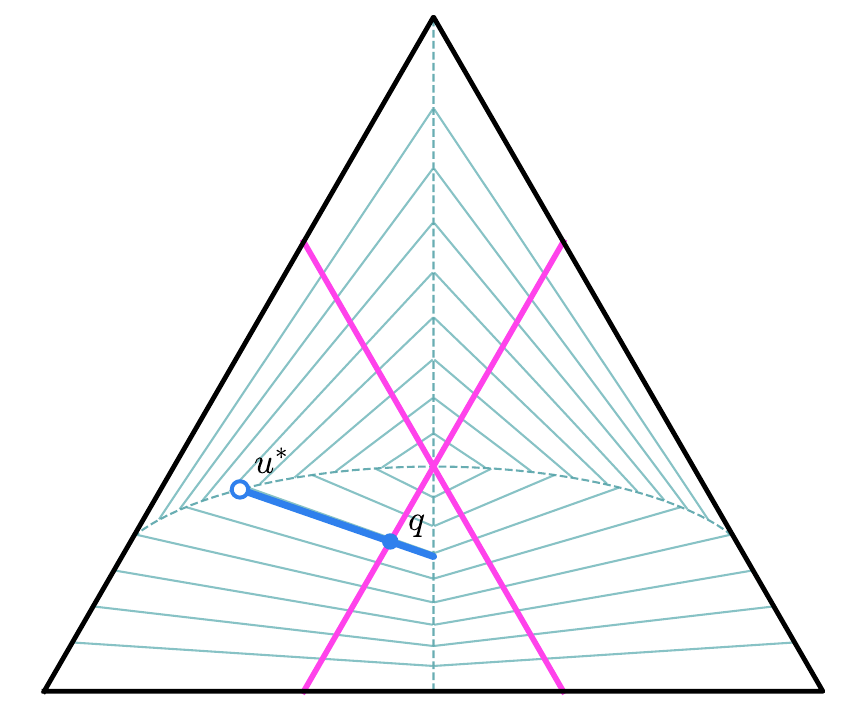}
    \hfill
    \includegraphics[width=0.48\linewidth]{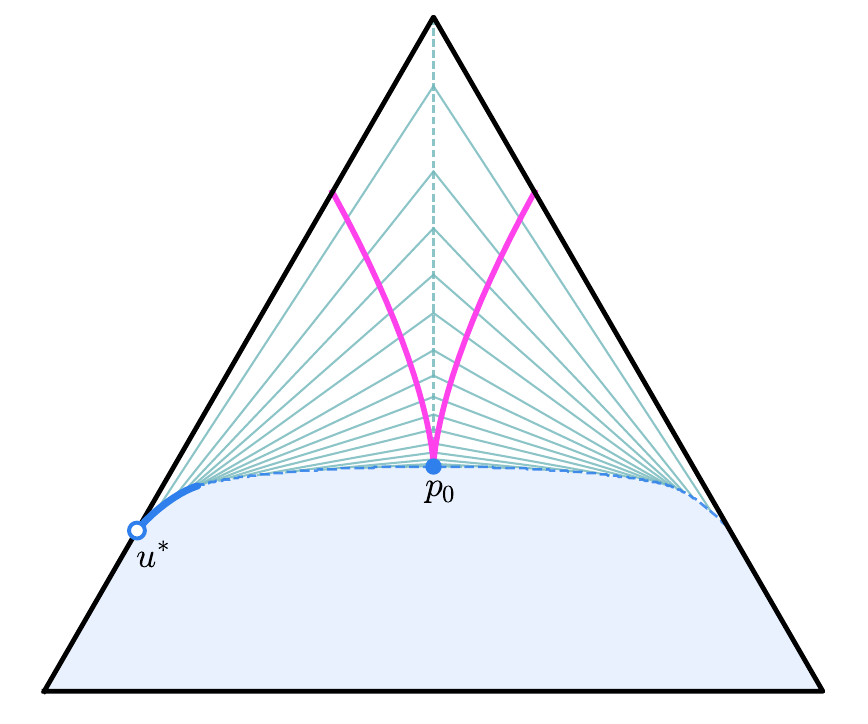}
    \caption{\small
    Logarithmic Voronoi decompositions for the union of two lines (left) and the cuspidal curve~(right). The models are shown in magenta and sampled logarithmic Voronoi cells in teal. Blue indicates the cell or boundary branch considered in each example, and $u^\ast$ marks the corresponding boundary point.}
    \label{fig:examples}
\end{figure}
\section{Preliminaries}

Let
$$
\Delta_{n-1}
=
\left\{
u\in\mathbb R_{\geq 0}^n:
u_1+\cdots+u_n=1
\right\}
$$
be the probability simplex. An \textit{algebraic statistical model} is a set
$\mathcal M=\mathcal V\cap \Delta_{n-1}$, where $\mathcal V\subseteq \mathbb R^n$
is a real algebraic variety. For data $u\in\Delta_{n-1}$, the \textit{log-likelihood function} with respect to $u$ is $\ell_u:\Delta_{n-1}\to \mathbb R\cup\{-\infty\}$ defined by
$\ell_u(p)=\sum_{i=1}^n u_i\log p_i$ with the conventions $0\log 0=0$ and $u_i\log 0=-\infty$ if $u_i>0$.
The value $\ell_u(p)$ measures how well the probability distribution $p \in \mathcal M$ explains the empirical distribution $u$. The \textit{maximum likelihood estimation problem} on a model $\mathcal M\subseteq\Delta_{n-1}$ is to maximize $\ell_u(p)$ over $p \in \mathcal{M}$. For $p\in\mathcal M$, the \textit{logarithmic Voronoi cell} at $p$ is
$$
\operatorname{logVor}_{\mathcal M}(p)
=
\left\{
u\in\Delta_{n-1}:
\ell_u(p)\geq \ell_u(q)\text{ for all }q\in\mathcal M
\right\}.
$$
Equivalently, $\operatorname{logVor}_{\mathcal M}(p)$ is the set of data vectors
for which $p$ is a global maximum likelihood estimate on $\mathcal M$. 

Let $p\in\mathcal M$ be a smooth point with positive coordinates and let $N_p\mathcal M$ denote the Euclidean normal space to $\mathcal M$ at $p$. The point $p \in \mathcal M$ is a critical point of $\ell_u$ if and only if $\nabla\ell_u(p) = (u_1/p_1,\ldots, u_n/p_n)\in N_p\mathcal M$. This condition defines the \textit{log-normal space} of $\mathcal M$ at $p$:
$$\log N_p\mathcal M=\{u\in\mathbb R^n:\nabla\ell_u(p)\in N_p\mathcal M\}.$$
The corresponding \textit{log-normal polytope} is
$\log\operatorname{Poly}_{\mathcal M}(p)=\log N_p\mathcal M\cap\Delta_{n-1}$. Thus, the log-normal polytope at $p$ is the set of data points in the simplex for which $p$ is a likelihood critical point.

For $p\in\mathcal M$, the notation $\partial\operatorname{logVor}_{\mathcal M}(p)$ denotes the boundary of $\operatorname{logVor}_{\mathcal M}(p)$ relative to its affine span. We use the word \textit{transcendental} in two related senses. For isolated boundary points, this is an arithmetic notion: a point $u\in\Delta_{n-1}$ is called \textit{algebraic} if all of its coordinates are algebraic numbers, and \textit{transcendental} otherwise. This is the sense used for the union of two lines. There, the logarithmic Voronoi cell is a line segment, so its boundary consists of endpoints, and we show that one endpoint is transcendental even though the fixed model point is rational.

For positive-dimensional boundary pieces, we use transcendence in a geometric sense. More precisely, we call a real-analytic piece of $\partial\operatorname{logVor}_{\mathcal M}(p)$ \textit{transcendental} if it is not contained in any proper real algebraic variety in the affine span of $\operatorname{logVor}_{\mathcal M}(p)$. This is the sense used for the cuspidal example, where the boundary contains a real-analytic branch that is not contained in any proper real algebraic variety.

\section{The union of two lines}
In this section we give the first example: a rational point on a reducible linear model whose logarithmic Voronoi cell has a transcendental endpoint. Let
$$
L_1=
\left\{
\left(\tfrac13+t,\tfrac13,\tfrac13-t\right):
-\tfrac13\leq t\leq \tfrac13
\right\},
\qquad
L_2=
\left\{
\left(\tfrac13,\tfrac13+s,\tfrac13-s\right):
-\tfrac13\leq s\leq \tfrac13
\right\},
$$
and let $\mathcal M=L_1\cup L_2$. We study the logarithmic Voronoi cell of the rational point
$$
q=\left(\tfrac49,\tfrac13,\tfrac29\right)\in L_1,
$$
given by the parameter $t = \frac 19$.

\begin{theorem}\label{thm:two-lines}
The logarithmic Voronoi cell $\operatorname{logVor}_{\mathcal M}(q)$ has an endpoint of the form
$$
u=\frac{(2,\beta,1)}{3+\beta},
$$
where $\beta\in(0,1)$ is transcendental. In particular, $\partial\operatorname{logVor}_{\mathcal M}(q)$ contains a transcendental endpoint.
\end{theorem}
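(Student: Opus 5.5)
The plan is to reduce the cell to a one-parameter family of data, compute the tie with $L_2$ in closed form, and then show that the tie parameter is transcendental.

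\textbf{Reducing to a segment.} The point $q$ has positive coordinates and lies only on $L_1$, since $L_1\cap L_2=\{(\tfrac13,\tfrac13,\tfrac13)\}$. Hence $q$ is a smooth point of $\mathcal M$. Any $u\in\operatorname{logVor}_{\mathcal M}(q)$ must make $q$ a critical point of $\ell_u$ along $L_1$. The tangent direction is $(1,0,-1)$, so the condition is $u_1/p_1=u_3/p_3$, that is, $u_1=2u_3$. Thus the cell lies in $\log\operatorname{Poly}_{\mathcal M}(q)$, which is the segment of points $u_\beta=(2,\beta,1)/(3+\beta)$ with $\beta\in[0,\infty]$. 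On $L_1$ the function $t\mapsto \ell_{u_\beta}(\tfrac13+t,\tfrac13,\tfrac13-t)$ is concave, so $q$ maximizes $\ell_{u_\beta}$ over $L_1$. It remains to compare $q$ only with $L_2$.

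\textbf{The tie with $L_2$.} On $L_2$ the first coordinate is fixed at $\tfrac13$. The maximizer splits the mass $\tfrac23$ between $p_2$ and $p_3$ in the ratio $\beta:1$, so $p_2=\tfrac{2\beta}{3(1+\beta)}$ and $p_3=\tfrac{2}{3(1+\beta)}$. I compare the maximum over $L_2$ with $\ell_{u_\beta}(q)$, after multiplying by $3+\beta$. The inequality $\ell_{u_\beta}(q)\ge\max_{L_2}\ell_{u_\beta}$ then reduces to $f(\beta)\le\log\tfrac{16}{27}$, where
$$f(\beta)=\beta\log\tfrac{2\beta}{1+\beta}-\log(1+\beta).$$
A short computation gives $f'(\beta)=\log\tfrac{2\beta}{1+\beta}$, which is negative on $(0,1)$. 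We also have $f(0^+)=0>\log\tfrac{16}{27}$ and $f(1)=\log\tfrac12<\log\tfrac{16}{27}$. So there is a unique root $\beta^*\in(0,1)$ of $f(\beta)=\log\tfrac{16}{27}$. For $\beta$ slightly below $\beta^*$, the point $u_\beta$ lies outside the cell, while $u_{\beta^*}$ lies in it. Since the cell is a closed convex subset of the segment, $u_{\beta^*}$ is an endpoint. The behaviour for $\beta>1$ does not affect this endpoint.

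\textbf{Transcendence.} The equation for $\beta^*$ is equivalent to
$$\Bigl(\tfrac{2\beta}{1+\beta}\Bigr)^{\beta}=\tfrac{16}{27}(1+\beta).$$

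First suppose $\beta=a/b$ is rational, in lowest terms, with $0<a<b$. Raising both sides to the power $b$ gives $\bigl(\tfrac{2a}{a+b}\bigr)^{a}=\bigl(\tfrac{16}{27}\bigr)^{b}\bigl(\tfrac{a+b}{b}\bigr)^{b}$. I compare $3$-adic valuations of the two sides.
\begin{itemize}
\item If $3\nmid a+b$, the left side has valuation $a\,v_3(a)\ge 0$. The right side has valuation $-3b-b\,v_3(b)<0$, a contradiction.
\item If $3\mid a+b$, then $3\nmid ab$. Write $k=v_3(a+b)\ge1$. Equating valuations gives $-ak=b(k-3)$, so $k(a+b)=3b$. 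Because $a+b>b$, this forces $k\in\{1,2\}$. The case $k=1$ gives $a=2b$, which contradicts $a<b$. The case $k=2$ gives $b=2a$, hence $(a,b)=(1,2)$; but then $a+b=3$ has valuation $1\neq2$, a contradiction.
\end{itemize}

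Now suppose $\beta$ is algebraic and irrational. Then $x=\tfrac{2\beta}{1+\beta}$ is algebraic, and $x\ne0,1$ because $\beta\ne0,1$. By Gelfond--Schneider, $x^\beta$ is transcendental. But the right-hand side $\tfrac{16}{27}(1+\beta)$ is algebraic, which is impossible.

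Hence $\beta^*$ is transcendental, and so is the endpoint $u_{\beta^*}$. I expect the main obstacle to be the rational case, which needs the careful valuation bookkeeping above. Verifying the sign of $f'$ and that the endpoint genuinely lies on $\partial\operatorname{logVor}_{\mathcal M}(q)$ is routine by comparison.
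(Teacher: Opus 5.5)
Your proof is correct and follows the paper's argument step by step: the reduction to the segment $u_1=2u_3$, the tie function (your $f$ is the paper's $K$ shifted by $\log 2$), the monotonicity and intermediate-value argument on $(0,1)$, and the Gelfond--Schneider step all match. The only variation is in excluding rational $\beta^*$. You compare $3$-adic valuations directly, splitting on whether $3\mid a+b$. The paper instead first uses pairwise coprimality of $m,n,m+n$ in the cleared identity $3^{3n}m^mn^n=2^{4n-m}(m+n)^{m+n}$ to force $m=1$, and only then takes $3$-adic valuations. Your version is slightly shorter and equally valid.
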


We will use the following standard form of the Gelfond--Schneider theorem \cite[Theorem~10.1]{niven1956gelfond}.

\begin{theorem}[Gelfond--Schneider]\label{thm:gelfond-schneider}
If $a$ and $b$ are algebraic numbers with $a\notin\{0,1\}$ and $b$ irrational, then any value of $a^b$ is transcendental.
\end{theorem}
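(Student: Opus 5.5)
The paper cites this result from \cite[Theorem~10.1]{niven1956gelfond} and does not prove it. If a self-contained argument were wanted, I would follow Gelfond's auxiliary-function method; this outline is the standard proof, not something I have checked in detail. Argue by contradiction: fix a branch $\lambda=\log a\neq 0$ and suppose $c=a^b=e^{b\lambda}$ is algebraic. Let $K=\mathbb Q(a,b,c)$, a number field of degree $d$. The two entire functions $e^{\lambda z}$ and $e^{b\lambda z}$ then take values in $K$ at every point of the lattice $\Gamma=\{m_1+m_2 b: m_1,m_2\in\mathbb Z\}$, because $e^{\lambda(m_1+m_2b)}=a^{m_1}c^{m_2}$. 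These functions are algebraically independent over $\mathbb C$, since $b$ is irrational and the exponents $\lambda(k+lb)$ are pairwise distinct. The points of $\Gamma$ are also pairwise distinct for the same reason.

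\textbf{Step 1 (auxiliary function).} Consider
$$F(z)=\sum_{k=0}^{L_1-1}\sum_{l=0}^{L_2-1} c_{kl}\, e^{\lambda(k+lb)z},$$
with rational integer coefficients $c_{kl}$. I would use Siegel's lemma to choose these coefficients, not all zero and of controlled size. They are chosen so that $F(m_1+m_2b)=0$ for all $0\le m_1,m_2<h$. This imposes roughly $d h^2$ linear conditions over $\mathbb Z$ on $L_1L_2$ unknowns, after clearing denominators and taking conjugates. Taking $L_1L_2$ somewhat larger than $2dh^2$ gives a solution whose coefficient height is at most $\exp(O(h\,\cdot\,\text{const}))$.

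\textbf{Step 2 (extrapolation).} Let $w$ be a lattice point with slightly larger coordinates. Bound $|F(w)|$ from above via the Schwarz lemma: divide $F$ by $\prod(z-\gamma)$ over the known zeros and apply the maximum modulus principle on a large disc. This makes $|F(w)|$ extremely small. On the other hand, $F(w)\in K$ is an algebraic number with bounded denominator and bounded conjugates. So by the Liouville (norm) inequality, if $F(w)\neq 0$ then $|F(w)|\geq \exp(-O(\cdot))$. For a suitable balance of $L_1,L_2,h$ the two bounds are incompatible, hence $F(w)=0$. Iterating enlarges the zero set to a box of side $H\gg h$.

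\textbf{Step 3 (zero estimate).} Show that an exponential polynomial with $L_1L_2$ distinct frequencies cannot vanish at $\geq L_1L_2$ distinct points of this kind unless all coefficients are zero. A convenient route is the nonvanishing of a generalized Vandermonde determinant $\det\big(e^{\lambda(k+lb)\gamma_j}\big)$; alternatively, a Pólya-type bound on the number of zeros of exponential sums in a disc. This gives $c_{kl}=0$ for all $k,l$, contradicting Step~1.

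\textbf{Main obstacle.} The hard part is the parameter bookkeeping in Step~2. The exponent from the Schwarz-lemma factor, roughly $h^2\log(R/r)$, must beat both the growth $\exp(O(L_1+L_2)R)$ of $F$ and the Liouville lower bound. I would first try $L_1=L_2\approx h^{3/2}$ with radii $r\sim h$ and $R\sim h\log h$, adjusting the exponents until every inequality closes. The zero estimate of Step~3 is standard once the distinctness of the frequencies and of the points is in hand.
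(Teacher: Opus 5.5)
The paper does not prove this theorem; it quotes it from Niven. Deferring to the reference is therefore exactly what the paper does.

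Your self-contained sketch, however, has a genuine gap at its first step. It is not the standard argument but a hybrid of Gelfond's and Schneider's methods, and the hybrid does not work. You check $K$-valuedness on $\Gamma=\mathbb Z+\mathbb Zb$ only for $e^{\lambda z}$. For the second function and for your auxiliary function one gets
\[
e^{b\lambda(m_1+m_2b)}=c^{m_1}\bigl(e^{\lambda b^2}\bigr)^{m_2},\qquad
F(m_1+m_2b)=\sum_{k,l}c_{kl}\,a^{km_1}c^{km_2+lm_1}\bigl(e^{\lambda b^2}\bigr)^{lm_2}.
\]
Here $e^{\lambda b^2}$ is a value of $a^{b^2}$, which the hypotheses do not make algebraic; when $b^2\notin\mathbb Q$ it is transcendental, by the theorem itself. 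So the conditions $F(\gamma)=0$ are not linear equations over $K$. Siegel's lemma cannot be applied after taking conjugates, and the Liouville lower bound in Step~2 is unavailable because $F(w)\notin K$. A sanity check exposes the problem. The values you write down are monomials in $a$ and $c$ alone, so nothing in your Steps 1--3 really uses that $b$ is algebraic. Yet the statement is false without that hypothesis, since $2^{\log 3/\log 2}=3$.

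This cannot be repaired by parameter bookkeeping. Even if you simply assumed $a^{b^2}\in K$, two exponentials sampled without derivatives on a rank-two lattice is the four exponentials configuration, where the counting is borderline. Siegel's lemma needs $L_1L_2>2dh^2$, so $L_1+L_2\gtrsim h$ and $F$ grows like $\exp(C\,hR)$ on $|z|=R$ for some $C>0$. The Schwarz gain, on the other hand, is $h^2\log(R/r)<h^2R/r\lesssim hR$, because $r\gtrsim h$. Nothing is left to absorb the Liouville term. Your trial choice $L_i\approx h^{3/2}$, $R\sim h\log h$ gives a gain of order $h^2\log\log h$ against growth of order $h^{5/2}\log h$.

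The classical proofs break this symmetry, and that is precisely where the algebraicity of $b$ is used.
\begin{itemize}
\item Schneider replaces one exponential by the polynomial $z$. For $F(z)=\sum c_{kl}z^ke^{\lambda lz}$ and $\gamma=m_1+m_2b$, one has $\gamma^k\in K$ and $e^{\lambda l\gamma}=a^{lm_1}c^{lm_2}$. Since $z^k$ costs only about $k\log R$ in the growth, its degree can be taken large enough to close the inequalities.
\item Gelfond keeps your $F$ but interpolates with derivatives at the integers, where $\lambda^{-j}F^{(j)}(m)=\sum_{k,l}c_{kl}(k+lb)^ja^{km}c^{lm}\in K$. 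The distinctness of the numbers $k+lb$ then enters in the final zero estimate.
\end{itemize}

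Separately, the first form of your Step~3 is false as stated: $e^{2\pi iz}-1$ has two frequencies and vanishes on all of $\mathbb Z$. What is needed is a quantitative count of zeros in a disc, applied to a zero set much larger than $L_1L_2$.
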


\begin{proof}[Proof of Theorem \ref{thm:two-lines}]

We proceed in three steps. First we identify the data points whose MLE on~$L_1$ is the fixed rational point $q$. Second, we compare the log-likelihood at $q$ with the maximum on $L_2$. Third, we show that the resulting boundary point is transcendental.

It is enough to work in the positive part of the simplex, since the endpoint we construct has positive coordinates. Write a data vector in ratio coordinates as
$$
u=\frac{(\alpha,\beta,1)}{\alpha+\beta+1},
$$
where $\alpha=u_1/u_3, \beta=u_2/u_3 >0$. This gives a reparametrization of the open simplex $\Delta^\circ$, since every point $u=(u_1,u_2,u_3)\in \Delta^\circ$ has $u_3>0$ and can therefore be written uniquely in this form. We first determine when the maximum likelihood estimate on $L_1$ is the fixed point~$q$. Along $L_1$, write $p(t)=\left(\tfrac13+t,\tfrac13,\tfrac13-t\right).$
Then
\[
\ell_u(p(t))=u_1\log\!\left(\tfrac13+t\right)+u_2\log\!\left(\tfrac13\right)+u_3\log\!\left(\tfrac13-t\right),
\]
hence the likelihood equation is
\[
\frac{d}{dt}\ell_u(p(t))=\frac{u_1}{\tfrac13+t}-\frac{u_3}{\tfrac13-t} = 0 \quad \implies \quad t=\frac{u_1-u_3}{3(u_1+u_3)}=\frac{\alpha-1}{3(\alpha+1)}.
\]

So the critical point of $\ell_u$ on $L_1$ is
\[
p^{(1)}(\alpha)=\left(\frac{2\alpha}{3(\alpha+1)},\frac13,\frac{2}{3(\alpha+1)}\right).
\]
Since $\ell_u$ is strictly concave on $L_1$, $p^{(1)}(\alpha)$ is the unique maximizer on~$L_1$. Moreover, $p^{(1)}(\alpha)=q$ if and only if $\alpha=2$. Thus the log-Voronoi cell at $q$ with respect to the model $L_1$ is the line segment
\begin{align}\label{eq:log-vor-wrt-L1}
\operatorname{logVor}_{L_1}(q) = 
\overline{\left\{\frac{(2,\beta,1)}{3+\beta}:\beta>0\right\}} \subseteq \Delta_2.
\end{align}
In particular, on this cell, we have $u_1 = 2 u_3$ and $u_2 = \beta u_3$. We now restrict to this cell and compare~$q$ with the maximum on $L_2$. Along $L_2$, write
$p(s)=\left(\tfrac13,\tfrac13+s,\tfrac13-s\right).$ Then
\[
\ell_u(p(s))=u_1\log\!\left(\tfrac13\right)+u_2\log\!\left(\tfrac13+s\right)+u_3\log\!\left(\tfrac13-s\right),
\]
giving the likelihood equation
\[
\frac{d}{ds}\ell_u(p(s))=\frac{u_2}{\tfrac13+s}-\frac{u_3}{\tfrac13-s} \quad \implies \quad
s=\frac{u_2-u_3}{3(u_2+u_3)}=\frac{\beta-1}{3(\beta+1)},
\]
and hence the critical point on $L_2$ is
\[
p^{(2)}(\beta)=\left(\frac13,\frac{2\beta}{3(\beta+1)},\frac{2}{3(\beta+1)}\right).
\]

Since $\ell_u$ is strictly concave on $L_2$, $p^{(2)}(\beta)$ is the unique maximizer on~$L_2$. Evaluating $\ell_u$ on $q$ and $p^{(2)}(\beta)$, while restricting $u$ to the data points in (\ref{eq:log-vor-wrt-L1}), we obtain
\[
\ell_u(q)=\ell_u\bigl(p^{(2)}(\beta)\bigr)
\quad\Longleftrightarrow\quad
K(\beta)=K(2),
\]
where
\[
K(r)=r\log\!\left(\frac{2r}{r+1}\right)+\log\!\left(\frac{2}{r+1}\right).
\]
Since
\[
K'(r)=\log\!\left(\frac{2r}{r+1}\right),
\]
the function $K$ is strictly decreasing on $(0,1)$ and strictly increasing on $(1,\infty)$. Also,
\[
K(2)=\log\!\left(\frac{32}{27}\right)
\qquad\text{and}\qquad
\lim_{r\to 0^+}K(r) = \log 2 > K(2) > 0 = K(1).
\]
By the Intermediate Value Theorem, there is a unique $\beta^*\in(0,1)$ such that $K(\beta^*)=K(2)$. For
$u^* = \frac{(2,\beta^*,1)} {3+\beta^*},$
the point $q$ ties with $p^{(2)}(\beta^*)$, so
$u^*\in \partial \operatorname{logVor}_\mathcal M(q).$ By the monotonicity of $K$, the inequality $\ell_u(q)\geq \ell_u(p^{(2)}(\beta))$ holds for $\beta^*\leq \beta\leq 2$, so $u^*$ is an endpoint of $\operatorname{logVor}_{\mathcal M}(q)$.

It remains to show that $\beta^*$ is transcendental. Suppose first that $\beta^*=m/n\in\mathbb{Q}$, written in lowest terms with $0<m<n$. Exponentiating and simplifying the equation $K(\beta^*)=\log(32/27)$ gives
\begin{align}\label{eq:exponentiation}
3^{3n}m^m n^n=2^{4n-m}(m+n)^{m+n}.
\end{align}
Since $\beta^*=m/n$ is in lowest terms, we have $\gcd(m,n)=1$. Hence
\[
\gcd(m,m+n)=\gcd(m,n)=1
\qquad\text{and}\qquad
\gcd(n,m+n)=\gcd(n,m)=1,
\]
so $m$, $n$, and $m+n$ are pairwise coprime. 
Thus, let $p$ be any prime dividing $m$ or $n$. Then $p$ divides the left-hand side of~(\ref{eq:exponentiation}), and hence also the right-hand side
$
2^{4n-m}(m+n)^{m+n}.
$
Since $p$ cannot divide $m+n$, it follows that $p=2$. Thus $m$ and $n$ are powers of $2$. Since $\gcd(m,n)=1$, one of them is $1$, and since $0<m<n$, we must have $m=1$. Then
\begin{align*}
    3^{3n}n^n=2^{4n-1}(n+1)^{n+1}.
\end{align*}

Taking $3$-adic valuations gives
\[
3n+n\,v_3(n)=(n+1)v_3(n+1).
\]
If $3\mid n$, then $v_3(n)\ge 1$ while $v_3(n+1)=0$, a contradiction. Hence $3\nmid n$, so $v_3(n)=0$, and therefore
\[
3n=(n+1)v_3(n+1).
\] Thus $n+1\mid 3n$. Since $\gcd(n,n+1)=1$, it follows that $n+1\mid 3$. Because $n>1$, we must have $n+1=3$, hence $n=2$. Substituting $(m,n)=(1,2)$ into (\ref{eq:exponentiation}) gives a contradiction. So $\beta^*\notin\mathbb{Q}$.

Now suppose that $\beta^*$ is algebraic irrational. Rewriting $K(\beta^*)=\log(32/27)$ gives
\begin{align}\label{eq:alg-irrarional}
\left(\frac{2\beta^*}{\beta^*+1}\right)^{\beta^*}=\frac{16(\beta^*+1)}{27}.
\end{align}

If $\beta^*$ were algebraic, then the base $\frac{2\beta^*}{\beta^*+1}$ would also be algebraic. Moreover, we have
$\frac{2\beta^*}{\beta^*+1}\neq 0,1.$
Indeed, it is not $0$ because $\beta^*>0$, and it is not $1$ because that would force
$2\beta^*=\beta^*+1,$
hence $\beta^*=1$, contrary to $\beta^*\in(0,1)$. Since $\beta^*$ is algebraic irrational, Theorem \ref{thm:gelfond-schneider} implies that the left-hand side of (\ref{eq:alg-irrarional})
must be transcendental. But the right-hand side of (\ref{eq:alg-irrarional}) is algebraic by assumption, a contradiction.

Hence $\beta^*$ is transcendental, and therefore $u^*=\frac{(2,\beta^*,1)}{3+\beta^*}$
is a transcendental boundary point of $\operatorname{logVor}_\mathcal M(q)$, which maps to an algebraic point on the model~$q$.

\end{proof}

\section{The cusp}

We now give a second example, this time at a singular point. Let
$$
\mathcal M
=
\left\{
(x,y,z)\in\Delta_2:
\left(z-\tfrac13\right)^3=(x-y)^2
\right\},
$$
and let $p_0=(\tfrac13,\tfrac13,\tfrac13)$ be its singular point. We will construct a real-analytic branch of $\partial\operatorname{logVor}_{\mathcal M}(p_0)$ and show that it is not algebraic. This branch consists of data vectors for which $p_0$ ties with a smooth point of $\mathcal M$. We use the parametrization
$$
p(t)
=\bigl(x(t),y(t),z(t)\bigr) =
\left(
\tfrac13-\tfrac12t^2+\tfrac12t^3,\,
\tfrac13-\tfrac12t^2-\tfrac12t^3,\,
\tfrac13+t^2
\right).
$$
Then $p(0)=p_0$, $x(t)+y(t)+z(t)=1$, and
$\left(z(t)-\tfrac13\right)^3=t^6=(x(t)-y(t))^2$.
Conversely, let $(x,y,z)\in\mathcal M$, and let $t\in\mathbb R$ be
defined by $t^3=x-y$. The defining equation of $\mathcal M$ gives
$z-\tfrac13=t^2$. Together with $x+y=1-z$, this gives
$(x,y,z)=p(t)$. Thus $p$ parametrizes all real points of $\mathcal M$.
Moreover, $p(t)$ is smooth for $t\neq 0$.

Let $r$ be the unique real root of $y(t)$. Indeed,
$y'(t)=-t(1+\tfrac32t)$, and
$y(-\tfrac23)=\tfrac7{27}$ and $y(0)=\tfrac13$ are both positive, so
$y$ has only one real root. Since $y(0)>0$ and $y(1)<0$, we have
$r\in(0,1)$. Since $x(t)=y(-t)$ and $z(t)>0$ for all $t$, it follows that
$\mathcal M\cap\Delta_2^\circ=\{p(t):-r<t<r\}$.

\begin{theorem}\label{thm:cusp}
The boundary $\partial\operatorname{logVor}_{\mathcal M}(p_0)$ contains
a real-analytic branch that is not contained in any proper real
algebraic subset of the affine plane $u_1+u_2+u_3=1$.
\end{theorem}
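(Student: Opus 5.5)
The plan is to realize the branch explicitly as a one-parameter family $u(t)$ of data vectors at which $p_0$ ties with the smooth point $p(t)$, for $t$ in an interval $(t_0,r)$ close to the boundary parameter $r$. I will then read off its asymptotics as $t\to r^-$.

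\textbf{Step 1 (the family).} For $0<t<r$ the point $p(t)$ is smooth with positive coordinates. The conditions that $p(t)$ is a likelihood critical point for $u$ and that $p_0$ ties with $p(t)$ are then
\[
\sum_i u_i\,\frac{p_i'(t)}{p_i(t)}=0,\qquad \sum_i u_i\log\bigl(3p_i(t)\bigr)=0,\qquad u_1+u_2+u_3=1 .
\]
The first condition says $u\in\log N_{p(t)}\mathcal M$, since the normal space to the curve is the orthogonal complement of $p'(t)$. Both are \emph{linear} in $u$. I will solve this $3\times 3$ system by Cramer's rule. Its coefficients are rational functions of $t$ and the logarithms $\log(3p_i(t))$, all real-analytic on $(0,r)$. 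Provided the determinant is nonzero, this gives a real-analytic curve $u(t)$.

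I must also check that $u(t)\in\Delta_2^\circ$ for $t$ near $r$. I expect this from the signs: $\log(3y(t))\to-\infty$ while $\log(3x(t))$ and $\log(3z(t))$ stay bounded with opposite signs. Solving shows $u_2(t)\sim c/\log\!\bigl(1/(r-t)\bigr)\to 0^+$, while $u_1(t)$ and $u_3(t)$ tend to positive limits $a_1,a_3$. Those limits are determined by the critical equation at $t=r$ with $u_2=0$.

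\textbf{Step 2 (boundary membership).} I will show that for $t$ close to $r$, both $p_0$ and $p(t)$ are global maximizers of $\ell_{u(t)}$ on $\mathcal M$. Then $u(t)\in\operatorname{logVor}_{\mathcal M}(p_0)$, and it lies on the relative boundary. To see the boundary claim, perturb $u$ along the log-normal segment at $p(t)$ slightly toward increasing $\ell_u(p(t))-\ell_u(p_0)$; this leaves the cell.

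For maximality I will study $f(s)=\ell_{u(t)}(p(s))$ on $(-r,r)$. The limit data $u(r)=(a_1,0,a_3)$ gives $f_r(s)=a_1\log x(s)+a_3\log z(s)$, which I can analyze directly. I would show that its critical points are exactly $s=0$ (the cusp) and $s=r$, with both global maxima of equal value, and that near these points the maxima are strict and nondegenerate in the appropriate one-sided sense. A compactness/continuity argument then transfers this to $u(t)$ for $t$ near $r$. The tie is built in, so it only remains to rule out other competitors. This global analysis near the cusp, where $p$ is not smooth and $f$ has a $t^2$-type singular behaviour, is the step I expect to be the main obstacle. I would first try to reduce it to sign checks of explicit polynomials in $s$ after clearing denominators in $f'(s)$.

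\textbf{Step 3 (non-algebraicity).} Suppose the branch lay in a proper real algebraic curve $\{g=0\}$ of the plane. Use coordinates $(u_1,u_2)$ and the limit point $(a_1,0)$. Expanding the Cramer formulas gives
\[
u_1(t)=a_1+\alpha(r-t)+\gamma\,u_2(t)+\dots,\qquad u_2(t)=\frac{c}{\log\frac{1}{r-t}}\bigl(1+o(1)\bigr),
\]
with $\gamma\neq 0$ and $c\neq0$ computed explicitly. Hence $r-t\approx e^{-c/u_2}$, and $u_1-a_1-\gamma u_2-(\text{higher powers of }u_2)$ decays like $e^{-c/u_2}$ but is nonzero. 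By the Newton--Puiseux theorem, every branch of $\{g=0\}$ through $(a_1,0)$ has the form $u_1-a_1=\sum_k c_k u_2^{k/N}$, or else is $u_2=0$. The second option is excluded because $u_2(t)>0$. Comparing with our expansion forces the Puiseux series to coincide with the analytic part in $u_2$ to all orders. The residual is then a nonzero term that is $O(u_2^M)$ for every $M$, which is impossible for a convergent Puiseux series minus another. This contradiction shows the branch is not contained in any proper algebraic subset.
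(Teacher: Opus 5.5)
Your architecture (tie branch by Cramer's rule, then global maximality, then Newton--Puiseux) is the paper's. However, the asymptotics in Step 1 are wrong, and Step 3 rests on them. Put $s=r-t$ and $\beta=y'/y$. Since $y(t)\sim -y'(r)\,s$, the coefficient of $u_2$ in the critical equation is $\beta\sim -1/s$, while every other coefficient stays bounded. So that equation forces $u_2=O(s)$; with your rate $u_2\sim c/\log(1/s)$ the term $u_2\beta$ would diverge and could not be balanced. Set $A_0=\log(3r^3)$, $C_0=\log(3(1-r^3))$, $\alpha_0=x'(r)/x(r)$, $\gamma_0=z'(r)/z(r)$, $c=C_0-A_0$ and $K=C_0\alpha_0-A_0\gamma_0$. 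Then $D=c/s+(\gamma_0-\alpha_0)\log s+O(1)$, $u_2=\frac{K}{c}s+O(s^2|\log s|)$ and $u_1=\frac{C_0}{c}+\frac{K}{c^2}s\log s+O(s)$. In particular, the limit $(a_1,0,a_3)=(C_0/c,0,-A_0/c)$ is fixed by the \emph{tie} equation, not the critical one. Indeed $u_2\beta\to -K/c$, so $a_1\alpha_0+a_3\gamma_0=K/c\neq 0$. Hence $r-t$ is not exponentially small in $u_2$, and there is no \emph{power series in $u_2$ plus flat remainder}; your expansion $a_1+\alpha(r-t)+\gamma u_2+\cdots$ omits the dominant $s\log s$ term. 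Your Step 3 argument therefore does not apply to the actual branch. The real obstruction is $u_1-a_1=\frac1c\,u_2\log u_2+O(u_2)$: the quotient $(u_1-a_1)/u_2$ diverges like $\log u_2$, which no Puiseux series permits. This requires $K\neq 0$, and the paper proves $K>0$ from $r\in(0.63,0.64)$. The same sign is what gives $u_2(t)>0$, i.e.\ puts $u(t)$ in the simplex, a check your proposal never makes.

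Step 2 is also off. The limit function $f_r=a_1\log x+a_3\log z$ has $f_r'(r)=K/c>0$, so $r$ is an endpoint maximum, not a critical point. Moreover, $f_r$ has an extra critical point $\tau_{\min}\in(0,r)$, a local minimum. More importantly, the continuity transfer from $f_r$ to $f_t$ breaks down exactly near $\tau=r$. There $u_2(t)\log y(\tau)$ is not uniformly small, and that is where the competing maximizer $p(t)$ lives. The cusp, which you flag as the main obstacle, is harmless: $H_t''(0)=9u_3(t)-3\to 9a_3-3<0$. The paper handles the region near $r$ through the derivative. It writes $H_t'(\tau)=\frac{\tau}{2xyz}P_t(\tau)$, where $P_t$ is a degree-six polynomial with coefficients linear in $u(t)$, so $P_t\to yM$ coefficientwise. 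It then checks that $y$ and $M$ each have exactly one real root ($r$ and $\tau_{\min}$), with all roots simple and none shared. The Rouch\'e-type persistence lemma then shows $P_t$ has exactly two real roots, $t$ and $\tau_t$, and the sign pattern of $H_t'$ finishes the argument. Your idea of clearing denominators in $f'$ points this way. But without such a root count, or an explicit local monotonicity argument on $[r-\epsilon,r)$, global maximality is not established. Your supporting-line argument for boundary membership is fine.
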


We will use the following standard consequence of Rouché's theorem
\cite[Proposition~17.5]{taylor2019complex}.

\begin{lemma}\label{lem:persistence}
Let $Q_t$ be a family of real polynomials of fixed degree whose
coefficients converge to those of a polynomial $Q^*$ as $t\to r$.
If all roots of $Q^*$ are simple, then, for $t$ sufficiently close to~$r$, each root of $Q^*$ has a neighborhood containing exactly one root
of $Q_t$. In particular, $Q_t$ and $Q^*$ have the same number of real
roots.
\end{lemma}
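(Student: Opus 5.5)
We prove Lemma~\ref{lem:persistence} directly from Rouché's theorem, working in $\mathbb C$ and then recovering the real-root count from complex conjugation. Let $d$ be the common degree and write $Q^*$ with distinct roots $\zeta_1,\dots,\zeta_d\in\mathbb C$; these are $d$ simple roots since $\deg Q^*=d$. We read ``fixed degree'' as including the requirement that the leading coefficient of $Q^*$ is nonzero, which is what makes the count of $d$ roots correct. Choose $\varepsilon>0$ so that the closed disks $\overline{D}_j=\{|z-\zeta_j|\le\varepsilon\}$ are pairwise disjoint. Whenever $\zeta_j\in\mathbb R$, these disks are symmetric under conjugation. Whenever $\zeta_j\notin\mathbb R$, we also require $\overline D_j\cap\mathbb R=\emptyset$, which holds once $\varepsilon<|\operatorname{Im}\zeta_j|$.

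\textbf{Rouché step.} On each circle $\partial D_j$ the polynomial $Q^*$ has no zeros, so $m_j=\min_{\partial D_j}|Q^*|>0$ by compactness. Since the coefficients of $Q_t$ converge to those of $Q^*$ and every $\partial D_j$ lies in a fixed bounded set, we have $\sup_{\partial D_j}|Q_t-Q^*|\to0$ as $t\to r$. So for $t$ close to $r$ this supremum is smaller than $m_j$ for every $j$. Rouché's theorem then says that $Q_t$ and $Q^*$ have the same number of zeros in $D_j$, counted with multiplicity. That number is $1$, because $\zeta_j$ is simple. This proves the first claim. The $d$ disks are disjoint and $\deg Q_t=d$, so these $d$ roots are all the roots of $Q_t$, and in particular every root of $Q_t$ is simple.

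\textbf{Real-root count.} Since $Q_t$ has real coefficients, complex conjugation maps roots to roots and preserves multiplicities.
\begin{itemize}
\item If $\zeta_j$ is real, then $D_j$ is conjugation-invariant. The unique root of $Q_t$ in $D_j$ must therefore equal its own conjugate, so it is real.
\item If $\zeta_j$ is non-real, then $D_j$ misses $\mathbb R$, so the root of $Q_t$ in $D_j$ is non-real.
\end{itemize}
Because every root of $Q_t$ lies in exactly one of the disks, the number of real roots of $Q_t$ equals the number of real $\zeta_j$, which is the number of real roots of $Q^*$.

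The one point needing care is the uniform convergence on the circles, which must be established before applying Rouché. If the leading coefficient of $Q^*$ were allowed to vanish, roots could escape to infinity, and the count would fail. The fixed-degree hypothesis is exactly what rules this out, and the triangle-inequality bound $|Q_t-Q^*|\le\sum_k|a_k(t)-a_k^*|\,R^k$ on $|z|\le R$ gives the uniform convergence.
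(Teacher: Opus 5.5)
Your proof is correct and follows the paper's argument essentially step for step. Both use pairwise disjoint disks, centered at the real roots and disjoint from $\mathbb R$ around the nonreal ones, apply Rouché's theorem via uniform convergence on the boundary circles, and use complex conjugation to show which disks contain real roots of $Q_t$. Your explicit observation that the $d$ disks contain all the roots of $Q_t$ relies on $\deg Q^*=d$, which holds in the paper's application since $P^*$ has degree six. This makes precise a step the paper leaves implicit in its ``in particular'' claim.
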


\begin{proof}
Choose pairwise disjoint closed disks around the roots of $Q^*$, each
containing exactly one root. For each real root, choose a disk centered
at that root, and hence invariant under complex conjugation. For each
nonreal root, choose a disk disjoint from the real axis. For each such disk~$D$, the polynomial $Q^*$ has no zeros on
$\partial D$, so
$$
m_D=\min_{z\in\partial D}|Q^*(z)|>0.
$$
Coefficientwise convergence implies that $Q_t\to Q^*$ uniformly on
$\partial D$. Thus, for $t$ sufficiently close to $r$, we have
$$
|Q_t(z)-Q^*(z)|<m_D\leq |Q^*(z)|
\qquad\text{for all }z\in\partial D.
$$
By Rouché's theorem, $Q_t$ and $Q^*$ have the same number of roots in
$D$, counted with multiplicity. Since the root of $Q^*$ in $D$ is
simple, $Q_t$ has exactly one root in $D$.

If $D$ is centered at a real root and its unique root of $Q_t$ were
nonreal, then its conjugate would also be a root of $Q_t$ in $D$,
contradicting uniqueness. Hence this root is real. If $D$ is centered
at a nonreal root, then its root is nonreal because $D$ is disjoint
from the real axis.
\end{proof}

\begin{proof}[Proof of Theorem~\ref{thm:cusp}]
We divide the proof into two parts. We first construct a non-algebraic
branch along which $p_0$ ties with a smooth critical point. We then
show that these two points are global maximizers.

\medskip
\noindent\textit{Construction of the tie branch.}
For $u=(u_1,u_2,u_3)\in\Delta_2^\circ$, set
$$
L_u(t)=\ell_u(p(t))
=u_1\log x(t)+u_2\log y(t)+u_3\log z(t).
$$
We seek data vectors for which $p_0=p(0)$ ties with a smooth critical
point $p(t)$. For $t\in(0,r)$, these conditions are
$L_u(t)=L_u(0)$ and $L_u'(t)=0$. Since $L_u(0)=\log(\tfrac13)$, set
$$
A(t)=\log(3x(t)),\qquad
B(t)=\log(3y(t)),\qquad
C(t)=\log(3z(t)),
$$
and
$$
\alpha(t)=\frac{x'(t)}{x(t)},\qquad
\beta(t)=\frac{y'(t)}{y(t)},\qquad
\gamma(t)=\frac{z'(t)}{z(t)}.
$$
Suppressing the dependence on $t$, the tie condition, the critical
point condition, and the simplex condition, respectively, become
\begin{align}
u_1A+u_2B+u_3C=0,\qquad
u_1\alpha+u_2\beta+u_3\gamma=0,\qquad
u_1+u_2+u_3=1.
\label{eq:cusp-system}
\end{align}
Whenever
\begin{align}
D(t)=A(\beta-\gamma)+B(\gamma-\alpha)+C(\alpha-\beta)
\label{eq:cusp-D}
\end{align}
is nonzero, the system \eqref{eq:cusp-system} has the unique solution via Cramer's rule:
\begin{align}
u_1(t)=\frac{B\gamma-C\beta}{D},\qquad
u_2(t)=\frac{-A\gamma+C\alpha}{D},\qquad
u_3(t)=\frac{A\beta-B\alpha}{D}.
\label{eq:cusp-u}
\end{align}

We study these solutions as $t\to r^-$. Since $y(r)=0$, we have
$\tfrac13=\tfrac12r^2+\tfrac12r^3$, and therefore $x(r)=r^3$ and
$z(r)=1-r^3$. Set
$$
A_0=\log(3r^3),\qquad
C_0=\log\bigl(3(1-r^3)\bigr),\qquad
\alpha_0=\frac{3r-2}{2r^2},\qquad
\gamma_0=\frac{2r}{1-r^3},
$$
and let $c=C_0-A_0$ and $K=-A_0\gamma_0+C_0\alpha_0$.

Since $r\in(0,1)$ and $2=3r^2+3r^3>6r^3$, we have
$r^3<\tfrac13$. Thus $A_0<0<C_0$, and in particular $c>0$. We will
also need $K>0$. Since $y(0.63)>0$ and $y(0.64)<0$, we have
$r\in(0.63,0.64)$. On this interval,
$-A_0>0.24$, $\gamma_0>1.68$, $C_0<0.82$, and
$|\alpha_0|<0.14$. Hence
$$
K=(-A_0)\gamma_0+C_0\alpha_0
\geq(-A_0)\gamma_0-C_0|\alpha_0|
>0.24\cdot1.68-0.82\cdot0.14>0.
$$

Write $s=r-t$, so that $s\to0^+$ as $t\to r^-$. Since $y(r)=0$ and
$y'(r)<0$, Taylor expansion at $r$ gives
$y(t)=as+O(s^2)$ and $y'(t)=-a+O(s)$, where
$a=-y'(r)>0$. It follows that
\begin{align}
B(t)=\log(3a)+\log s+O(s),\qquad
\beta(t)=-\frac1s+O(1).
\label{eq:cusp-B-beta}
\end{align}
The remaining functions are analytic at $r$, so
\begin{align}
A(t)=A_0+O(s),\qquad C(t)=C_0+O(s),\qquad
\alpha(t)=\alpha_0+O(s),\qquad
\gamma(t)=\gamma_0+O(s).
\label{eq:cusp-regular}
\end{align}
Substituting \eqref{eq:cusp-B-beta} and \eqref{eq:cusp-regular} into
\eqref{eq:cusp-D} gives
\begin{align}
D(t)=\frac{c}{s}+(\gamma_0-\alpha_0)\log s+O(1).
\label{eq:cusp-D-asymptotic}
\end{align}
In particular, \eqref{eq:cusp-D-asymptotic} shows that $D(t)\neq0$ for
$t<r$ sufficiently close to $r$. Writing
$d=\gamma_0-\alpha_0$, we get
\begin{align}
\frac1{D(t)}
=\frac{s}{c}-\frac{d}{c^2}s^2\log s+O(s^2).
\label{eq:cusp-D-inverse}
\end{align}

The numerators in \eqref{eq:cusp-u} satisfy
$-A\gamma+C\alpha=K+O(s)$ and
$B\gamma-C\beta=C_0/s+\gamma_0\log s+O(1)$. Therefore,
using \eqref{eq:cusp-D-inverse},
\begin{align}
u_2(t)=\frac{K}{c}s+O\bigl(s^2|\log s|\bigr),
\qquad
u_1(t)=\frac{C_0}{c}+\frac{K}{c^2}s\log s+O(s).
\label{eq:cusp-u-asymptotics}
\end{align}
Since $u_3=1-u_1-u_2$, it follows that
$$
u_1(t)\longrightarrow\frac{C_0}{c}>0,\qquad
u_2(t)\longrightarrow0^+,\qquad
u_3(t)\longrightarrow\frac{-A_0}{c}>0.
$$
Thus $u(t)\in\Delta_2^\circ$ for $t<r$ sufficiently close to $r$.
By construction, $p_0$ and $p(t)$ have the same log-likelihood with
respect to $u(t)$, and $p(t)$ is a critical point of
$\ell_{u(t)}$ on $\mathcal M$.

Let $u_1^*=C_0/c$. Since $K/c>0$, \eqref{eq:cusp-u-asymptotics} gives
\begin{align}
s=\frac{c}{K}u_2+O\bigl(u_2^2|\log u_2|\bigr).
\label{eq:cusp-s-u2}
\end{align}
Substituting \eqref{eq:cusp-s-u2} into the expression for $u_1$ in
\eqref{eq:cusp-u-asymptotics} gives
\begin{align}
u_1-u_1^*=\frac1c\,u_2\log u_2+O(u_2)
\qquad\text{as }u_2\to0^+.
\label{eq:cusp-branch-asymptotic}
\end{align}
Since $D(t)\neq0$, the formulas in \eqref{eq:cusp-u} define a
real-analytic map $t\mapsto u(t)$ on an interval $(r-\delta,r)$.

We now show that its image is not algebraic. Since
$u_2(t)=\frac{K}{c}(r-t)+O((r-t)^2|\log(r-t)|)$, the coordinate $u_2$
is a local parameter near the limiting point $(u_1^*,0)$. If the branch
were contained in a proper real algebraic subset of the data plane,
then it would be contained in the zero set of some nonzero polynomial
in $u_1$ and $u_2$. Since $u_2$ is a local parameter on the branch,
the Newton--Puiseux theorem \cite[Theorem~VII.7]{flajolet2009analytic}
would then give a locally convergent expansion of $u_1-u_1^*$ in
fractional powers of $u_2$,
\begin{align}
u_1-u_1^*=\sum_{m\geq m_0}a_m u_2^{m/N}
\label{eq:cusp-puiseux}
\end{align}
for some $N\geq1$. After dividing \eqref{eq:cusp-puiseux} by $u_2$,
the leading behavior would be a power of $u_2$ or a finite limit. On
the other hand, dividing \eqref{eq:cusp-branch-asymptotic} by $u_2$
gives
$$
\frac{u_1-u_1^*}{u_2}
=\frac1c\log u_2+O(1),
$$
which diverges logarithmically as $u_2\to0^+$. This is impossible for
a Puiseux series, so the branch is not algebraic.

\medskip
\noindent\textit{Global maximality.}
It remains to show that this branch lies on the actual log-Voronoi
boundary. Since $u(t)\in\Delta_2^\circ$, the log-likelihood is $-\infty$
at $p(-r)$ and $p(r)$, where $x(-r)=0$ and $y(r)=0$, respectively.
It is therefore enough to study the likelihood on
$\mathcal M\cap\Delta_2^\circ=\{p(\tau):-r<\tau<r\}$. Fix $t<r$
sufficiently close to $r$, and define
$$
H_t(\tau)
=\ell_{u(t)}(p(\tau))-\ell_{u(t)}(p_0),
\qquad -r<\tau<r.
$$
We already know that $H_t(0)=H_t(t)=0$ and $H_t'(t)=0$. Differentiating gives
$$
H_t'(\tau)
=\frac{\tau}{2x(\tau)y(\tau)z(\tau)}P_t(\tau),
$$
where
$$
P_t(\tau)
=u_1(t)(3\tau-2)y(\tau)z(\tau)
-u_2(t)(3\tau+2)x(\tau)z(\tau)
+4u_3(t)x(\tau)y(\tau).
$$
The polynomial $P_t$ has degree six. Indeed, its leading coefficient is
$-\tfrac32u_1(t)-\tfrac32u_2(t)-u_3(t)
=\tfrac12u_3(t)-\tfrac32<0$.

Thus the smooth likelihood equation is a degree-six equation. In other words, the model $\mathcal{M}$ has ML degree 6 \cite{ml-degree}.  We will
show that, for the data $u(t)$ constructed above, only two of its six
roots are real. Together with the singular critical point
$\tau=0$, this will give exactly three real critical points of
$H_t$. As $t\to r^-$, we have
$$
u(t)\longrightarrow u^*=(u_1^*,0,u_3^*),
\qquad
u_1^*=\frac{C_0}{c},\qquad
u_3^*=\frac{-A_0}{c}.
$$
Hence $P_t$ converges coefficientwise to
$$
P^*(\tau)=y(\tau)M(\tau),
\qquad
M(\tau)=u_1^*(3\tau-2)z(\tau)+4u_3^*x(\tau).
$$

We first consider the roots of $y$. Since
$y'(\tau)=-\tau(1+\tfrac32\tau)$ and
$y(-\tfrac23)=\tfrac7{27}>0$, $y(0)=\tfrac13>0$, and $y(r)=0$,
the polynomial $y$ has exactly one real root, namely $r$. This root is
simple because $y'(r)<0$. Its two nonreal roots are also simple, since
a repeated nonreal root of a real cubic would force its conjugate to
be repeated as well.

Next, consider $M$. Its derivative is
$
M'(\tau)=(6+3u_1^*)\tau^2-4\tau+u_1^*.
$
We claim that $u_1^*>\tfrac23$. Indeed,
$$
u_1^*>\frac23
\quad\Longleftrightarrow\quad
27r^6(1-r^3)>1,
$$
and the estimate $r\in(0.63,0.64)$ gives
$27r^6(1-r^3)>27(0.63)^6(1-(0.64)^3)>1$.
It follows that the discriminant
$16-4(6+3u_1^*)u_1^*$ of $M'$ is negative. Hence $M$ is strictly
increasing. Moreover, $M(0)=(4-6u_1^*)/3<0$. At $\tau=r$,
$$
u_1^*\alpha_0+u_3^*\gamma_0
=\frac{K}{c}
=\frac{r}{2x(r)z(r)}M(r)>0,
$$
so $M(r)>0$. Thus $M$ has a unique real root
$\tau_{\min}\in(0,r)$. This root is simple because
$M'(\tau)>0$ for all $\tau\in\mathbb R$. Its two nonreal roots are also
simple, since otherwise their conjugates would also be repeated, which
is impossible for a cubic.

The polynomials $y$ and $M$ have no common root. They do not share a
real root because $M(r)>0$. To exclude a common nonreal root, let
$q(\tau)=\tau^3+\tau^2-\tfrac23$, so that
$y(\tau)=-\tfrac12q(\tau)$. Modulo $q$,~we~get
\begin{align}
M(\tau)\equiv
-(u_1^*+4)\tau^2+u_1^*\tau-\frac43u_1^*+\frac83.
\label{eq:cusp-M-remainder}
\end{align}
If $y$ and $M$ shared a nonreal root, then, together with its conjugate,
it would be a root of the quadratic factor
\begin{align}
\tau^2+(1+r)\tau+\frac{2}{3r}
\label{eq:cusp-q-factor}
\end{align}
of $q$. The factor in \eqref{eq:cusp-q-factor} would therefore divide
the quadratic remainder in \eqref{eq:cusp-M-remainder}. Comparing the
coefficients of $\tau^2$ and $\tau$ would give
$u_1^*=-(u_1^*+4)(1+r)$, which is impossible, since $u_1^*>0$ and~$r>0$.

Consequently, $P^*=yM$ has two real simple roots, namely $r$ and
$\tau_{\min}$, and four nonreal simple roots. By
Lemma~\ref{lem:persistence}, $P_t$ has exactly two real roots for
$t$ sufficiently close to $r$. One is $t$, since $H_t'(t)=0$ and
$t\to r$. Denote the other by $\tau_t$. Since
$\tau_t\to\tau_{\min}\in(0,r)$, we have
$$
0<\tau_t<t<r
$$
for $t$ sufficiently close to $r$. Thus the degree-six likelihood
equation has two real roots and four nonreal roots. Including the
singular critical point $\tau=0$, the only real critical points of
$H_t$ are $0$, $\tau_t$, and $t$. At the singular point,
$$
H_t''(0)=9u_3(t)-3.
$$
Since $u_3(t)\to u_3^*=1-u_1^*<\tfrac13$, we have $H_t''(0)<0$ for
$t$ sufficiently close to $r$. Thus $\tau=0$ is a strict local
maximum. Moreover,
$$
H_t(\tau)\longrightarrow-\infty
\quad\text{as }\tau\to-r^+\text{ or }\tau\to r^-,
$$
because $x(\tau)\to0$ as $\tau\to-r^+$ and $y(\tau)\to0$ as
$\tau\to r^-$.

Since the only real critical points of $H_t$ are
$0$, $\tau_t$, and $t$, the sign of $H_t'$ is constant on each of the
intervals $(-r,0)$, $(0,\tau_t)$, $(\tau_t,t)$, and $(t,r)$.
Because $H_t''(0)<0$, we have $H_t'>0$ immediately to the left of $0$
and $H_t'<0$ immediately to the right. Hence $H_t$ is strictly
increasing on $(-r,0)$ and strictly decreasing on $(0,\tau_t)$, so
$H_t(\tau_t)<H_t(0)=0$. Since $H_t(t)=0$, it follows that $H_t$ is
strictly increasing on $(\tau_t,t)$. Finally, since
$H_t(\tau)\to-\infty$ as $\tau\to r^-$, $H_t$ is strictly decreasing
on $(t,r)$.

\begin{center}
\begin{tikzpicture}[x=1.25cm,y=1.15cm]

\draw[->, gray!70] (-4.2,0) -- (4.25,0) node[right] {$\tau$};

\draw[thick, teal]
(-3.8,-2.5)
.. controls (-3.0,-1.0) and (-0.8,0) .. (0,0)
.. controls (0.4,0) and (0.65,-1.15) .. (1.35,-1.15)
.. controls (2.0,-1.15) and (2.4,0) .. (2.8,0)
.. controls (3.2,0) and (3.55,-1.0) .. (3.8,-2.5);

\fill[violet] (0,0) circle (1.8pt);
\fill[gray] (1.35,-1.15) circle (1.8pt);
\fill[violet] (2.8,0) circle (1.8pt);

\draw[dashed, magenta] (0,0) -- (0,-0.1);
\draw[dashed, gray] (1.35,-1.15) -- (1.35,0);
\draw[dashed, magenta] (2.8,0) -- (2.8,-0.1);

\draw (-3.8,0.07) -- (-3.8,-0.07) node[below] {$-r$};
\draw (0,0.07) -- (0,-0.07) node[below] {$0$};
\draw (1.35,0.07) -- (1.35,-0.07) node[below] {$\tau_t$};
\draw (2.8,0.07) -- (2.8,-0.07) node[below] {$t$};
\draw (3.8,0.07) -- (3.8,-0.07) node[below] {$r$};

\node[teal,left] at (-3.85,-2.4) {$-\infty$};
\node[teal,right] at (3.85,-2.4) {$-\infty$};

\node[violet,above left] at (0,0) {$H_t(0)=0$};
\node[gray,below] at (1.35,-1.15) {$H_t(\tau_t)$};
\node[violet,above] at (2.8,0) {$H_t(t)=0$};

\end{tikzpicture}

\small\emph{Schematic behavior of the function $H_t$.}
\end{center}
Therefore
$
H_t(\tau)\leq0
$
for every $\tau\in(-r,r)$, with equality if and only if
$\tau=0$ or $\tau=t$. Hence $p_0$ and $p(t)$ are the only global
maximizers of $\ell_{u(t)}$ on $\mathcal M$.

It remains only to check that $u(t)$ is a boundary point of the
log-Voronoi cell. Since $p_0$ and $p(t)$ are both global maximizers at
$u(t)$, we have $u(t)\in\operatorname{logVor}_{\mathcal M}(p_0)$.
For fixed $t$, let
$$
F_t(v)=\ell_v(p_0)-\ell_v(p(t)).
$$
By definition,
$\operatorname{logVor}_{\mathcal M}(p_0)\subseteq\{v:F_t(v)\geq0\}$,
while $F_t(u(t))=0$. On the other hand, $p_0$ itself belongs to
$\operatorname{logVor}_{\mathcal M}(p_0)$ and, since $p(t)\neq p_0$,
$
F_t(p_0)=\ell_{p_0}(p_0)-\ell_{p_0}(p(t))>0.
$
Thus $F_t$ is nonconstant on the affine span of
$\operatorname{logVor}_{\mathcal M}(p_0)$, and $F_t=0$ defines a
supporting hyperplane of the cell containing~$u(t)$. Therefore
$
u(t)\in\partial\operatorname{logVor}_{\mathcal M}(p_0).
$
\end{proof}

\section*{Acknowledgments}

I am grateful to Serkan Ho\c{s}ten for his helpful comments on this manuscript. I also thank Álvaro Ribot and Anna Seigal for useful discussions about Voronoi cells at singular points, which helped motivate my return to related questions in the logarithmic setting.

The author acknowledges the use of ChatGPT for language editing and exploratory discussion. All mathematical results were developed and verified by the author.

\bibliographystyle{plain}
\bibliography{references}

\vspace{-1.75em}
\authorinfo

\end{document}